\newtheorem{theorem}{Theorem}[section]
\newtheorem{defn}{Definition}[section]
\newtheorem*{theorem*}{Theorem}
\newtheorem{cor}{Corollary}[section]
\theoremstyle{definition}
\newcommand{\me}{\mathcal{M}}        %
\DeclareMathAlphabet{\mathpzc}{OT1}{pzc}{m}{it}
\newcommand{\ri}{\mathrm{Ric}}
\newcommand{\R}{\mathbb{R}}           % real numbers
\newcommand{\mc}{MC^k}
\begin{document}
\title[A simple proof of the short-time existence and uniqueness for Ricci flow]
      {A simple proof of the short-time existence and uniqueness  for Ricci flow}

\author{Kaveh Eftekharinasab}
\address{
Topology dept. \\ Institute of Mathematics of NAS of Ukraine \\ Te\-re\-shchen\-kivska st. 3, Kyiv, 01601 Ukraine
}
\email{kaveh@imath.kiev.ua}
\keywords{Bounded Fr\'{e}chet manifold, Ricci flow, bounded geometry}
\subjclass[2010]{
53C44, %Geometric evolution equations (mean curvature flow, Ricci flow, etc.)
58A99 % infinite dimensional manifolds
 }
\begin{abstract}
 In this short note, we give simple proof of the Ricci flow's local existence and uniqueness on closed Einstein manifolds. We suggest a new setting for studying the space of Riemannian metrics on a compact manifold.
\end{abstract}  
\maketitle
\vspace*{-7mm}
In a series of papers (~\cite{k,k2,ka}) the author has studied a new class of generalized Fr\'{e}chet manifolds. In this short note we give an almost evident application
of these manifolds to study geometric evaluations equations.

A Ricci flow  is a family  of Riemannian metrics $g(t)$ on a smooth manifold, 
parametrized by a time interval $I \subset \R$ and evolving by 
$$
\dfrac{\partial g(t)}{\partial t} = -2 \ri (g(t)),
$$
where $\ri (g(t))$ is the Ricci curvature of the metric $g(t)$. One considers $t$ as time and studies the equation as an initial
value problem, that is to say find a one-parameter family $(M,g(t))$ of Riemannian manifolds with $g(0)=g_0$ satisfying
the Ricci flow equation. The Ricci flow is a weakly parabolic system, therefore, the short-time existence  does not come from the 
general PDE theory. The original  proof of the local existence and uniqueness for solutions to the  Ricci flow on a compact manifold  by Hamilton (\cite{h}) is sophisticated and involves the tame Fr\'{e}chet setting and 
the Nash-Moser inverse function theorem. Later, DeTurck~\cite{d} simplified the proof by adding to the 
equation a separate evolving diffeomorphism to make it a strongly parabolic and deduced the local existence and uniqueness from an equivalent equation which is strictly parabolic, and the classical inverse function theorem suffices.

The key ingredient to study geometric evaluation equations is a structure of the space of all Riemannian metrics on a given manifold which is
a Fr\'{e}chet manifold. There are some intrinsic difficulties with Fr\'{e}chet spaces such as the lack of a general theory for ODEs and
an inverse function theorem (in general) that make the proofs of results (like the Hamilton's proof) involving these spaces formidable. However, 
there are some ways out of these difficulties. One way is a projective limit approach where we consider a special type of 
Fr\'{e}chet manifolds, including the space of Riemannian metrics, which are obtained as projective limit of Banach manifolds (see~\cite{ctc}). Using this method a simpler proof for the short-time existence for Ricci flow was established in~\cite{ir}.  But this approach
also has a difficulty, one needs to establish the existence of  projective limits of Banach corresponding factors of geometrical and topological objects which would not be  always easy.

In this article we give a simple proof for  the local existence for Ricci flow. We give to the space of Riemannian metrics
on a compact manifold the structure of a generalized Fr\'{e}chet manifold, bounded (or $MC^k$)  Fr\'{e}chet manifold, introduced in~\cite{muller}. These generalized manifolds surpass the  calculus and  geometry of  Fr\'{e}chet manifolds and we
would expect more of their applications to problems in global analysis  (cf.~\cite{k2}). 
\section{RESULT}
For the reader's convenience, we briefly recall the bounded Fr\'{e}chet setting. For more details we refer to~\cite{k2}.

Let $(F,d)$ be a Fr\'{e}chet space whose topology is defined by a complete translation-invariant metric $d$. 
A metric with absolutely convex balls will be called a standard metric. 
Every Fr\'{e}chet space  admits a standard metric which defines its topology. We shall always define the
topology of Fr\'{e}chet spaces with this type of metrics.

Let $(E,g)$ and $(F,d)$ be Fr\'{e}chet spaces and let $\mathcal{L}_{g,d}(E,F)$ be the set of all 
linear maps $ L: E \rightarrow F $ such that 
\begin{equation*}
\mathpzc{Lip} (L )_{g,d}\, \coloneq \displaystyle \sup_{x \in E\setminus\{0\}} \dfrac{d (L(x),0)}{g( x,0)} < \infty.
\end{equation*}
The translation-invariant metric 
\begin{equation} \label{metric}  
 D_{g,d}: \mathcal{L}_{g,d}(E,F) \times \mathcal{L}_{g,d}(E,F) \longrightarrow [0,\infty) , \,\,
(L,H) \mapsto \mathpzc{Lip}(L -H)_{g,d} \,,
\end{equation}
on $\mathcal{L}_{d,g}(E,F)$ turns it  into an Abelian topological group (\cite[Remark 2.1]{k}).
Let $ E,F $ be Fr\'{e}chet spaces, $ U $ an open subset of $ E $, and $ P:U \rightarrow F $
 a continuous map. Let $CL(E,F)$ be the space of all continuous linear maps from $E$ to $F$ topologized by the compact-open topology. 
 We say $ P $ is differentiable at the point $ p \in U$ if the directional derivative  
$\operatorname{d}P(p)$ exists in all directions $ h \in E $. 
If $P$ is differentiable at all points $p \in U$, if $\operatorname{d}P(p) : U \rightarrow CL(E,F)$ is continuous for all
$p \in U$ and if the induced map $ P': U \times E \rightarrow F,\,(u,h) \mapsto \operatorname{d}P(u)h $
 is continuous in the product topology, then we say that $ P $ is Keller-differentiable.
 We define $ P^{(k+1)}: U \times E^{k+1} \rightarrow F $ in the obvious inductive fashion.

If $P$ is Keller-differentiable, $ \operatorname{d}P(p) \in \mathcal{L}_{d,g}(E,F) $ for all $ p \in U $, and the induced map 
$ \operatorname{d}P(p) : U \rightarrow \mathcal{L}_{d,g}(E,F)   $ is continuous, then $ P $ is called bounded differentiable. We say $ P $ is $ MC^{0} $ 
and write $ P^0 = P $ if it is continuous. 
We say $P$ is an $ MC^{1} $ and write  $P^{(1)} = P' $ if it is bounded differentiable. Let $ \mathcal{L}_{d,g}(E,F)_0 $ be 
the connected component of $ \mathcal{L}_{d,g}(E,F) $ containing the zero map. If $ P $ is bounded differentiable and if 
 $V \subseteq U$ is a connected open neighborhood of $x_0 \in U$, then $P'(V)$ is connected and hence contained in the connected component
$P'(x_0) +  \mathcal{L}_{d,g}(E,F)_0 $ of $P'(x_0)$ in $\mathcal{L}_{d,g}(E,F)$. Thus, $P'\mid_V - P'(x_0):V \rightarrow \mathcal{L}_{d,g}(E,F)_0 $
 is again a map between subsets of Fr\'{e}chet spaces. This enables a recursive definition: If $P$ is $MC^1$ and $V$ can be chosen for each
 $x_0 \in U$ such that $P'\mid_V - P'(x_0):V \rightarrow \mathcal{L}_{d,g}(E,F)_0 $ is $ MC^{k-1} $, 
then $ P $ is called an $ MC^k$-map. We make a piecewise definition of $P^{(k)}$ by $ P^{(k)}\mid_V \coloneq \left(P'\mid_V - P'(x_0)\right)^{(k-1)} $
for $x_0$ and $V$ as before. 
The map $ P $ is $ MC^{\infty} $ (or smooth) if it is $ MC^k $ for all $ k \in \mathbb{N}_0 $. We shall denote the derivative of $P$ at $p$ by $\operatorname{D}P(p)$.

Within this framework we can define $\mc$-Fr\'{e}chet manifolds, $\mc$-maps of manifolds
and tangent bundle over $\mc$-Fr\'{e}chet manifolds in obvious fashion way.

Given a closed connected smooth Riemannian manifold $(M,g)$, we want to endow the set $\me$ of all Riemannian metrics on it
with the structure of a bounded Fr\'{e}chet manifold. To get this manifold structure we need conditions of bounded geometry 
(cf. \cite{cg,sh}).
\begin{defn}
We will say that $(M,g)$ has bounded geometry, if
\begin{description}
 \item[i] the injectivity radius is positive, and 
 \item[ii] the curvature tensor of $M$ and all its covariant derivatives are bounded. 
\end{description}
A vector (or fiber) bundle $\pi: E\rightarrow M$ equipped with a Riemannian bundle metric and a compatible connection
will be said to have a bounded geometry, if
\begin{description}
\item[iii] the curvature tensor of $E$ and all its covariant derivatives are bounded.
\end{description}
\end{defn}
Every bundle over a compact manifold has a bounded geometry. Furthermore, Trivial bundle, tangent and cotangent bundles, and
tensor bundles are example of bundles of bounded geometry (see~\cite{sh}). Any manifold admits a metric of bounded geometry
(\cite{gr}). Analogously, any vector bundle can be equipped with a Riemannian bundle metric and a compatible connection
such that it becomes of bounded geometry (\cite[Lemma 2.9]{eg}).

The bounded Fr\'{e}chet manifold structure of $\me$ is a direct consequence of the following theorem.
\begin{theorem}\cite[Theorem 3.35]{muller}\label{boun}
Let $\pi :E \rightarrow M$ be a fiber bundle over $M$ equipped
with a Riemannian fiber bundle metric and a fibre bundle such that the bundle is of
bounded geometry. Then the space of smooth sections $\Gamma(\pi)$ can be given the structure of a bounded
Fr\'{e}cht manifold.
\end{theorem}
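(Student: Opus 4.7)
The plan is to construct a bounded Fréchet atlas on $\Gamma(\pi)$ by modelling a neighborhood of each section $s_0\in\Gamma(\pi)$ on the Fréchet space of sections of its vertical pullback bundle, using the fiberwise exponential map of a compatible connection to produce the charts. Concretely, set $V_{s_0}\coloneq s_0^{\ast}(VE)$, the pullback of the vertical tangent bundle of $E$ along $s_0$, and model a neighborhood of $s_0$ on $\Gamma(V_{s_0})$ topologized by the global $C^k$-seminorms
\[
\|\sigma\|_k\coloneq \sup_{x\in M,\,0\le j\le k}\,|\nabla^{j}\sigma(x)|
\]
built from the bundle metric on $V_{s_0}$ and the compatible connection; bounded geometry of $E$ guarantees these are finite on smooth sections. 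The chart is then
\[
\varphi_{s_0}\colon U_{s_0}\subset \Gamma(V_{s_0})\longrightarrow \Gamma(\pi),\qquad \sigma\longmapsto\bigl(x\mapsto \exp_{s_{0}(x)}\sigma(x)\bigr),
\]
with $U_{s_0}$ chosen small enough in each seminorm that $\exp$ restricts to a fiberwise diffeomorphism onto its image.

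Because the $\mc$-calculus requires a \emph{standard} metric, i.e.\ one whose balls are absolutely convex, I would equip $\Gamma(V_{s_0})$ with
\[
d(\sigma,\tau)\coloneq \sup_{k\ge 0}\,\min\!\bigl(2^{-k},\,\|\sigma-\tau\|_{k}\bigr),
\]
whose $\varepsilon$-balls are finite intersections of convex seminorm balls and are therefore absolutely convex. A direct check shows that $d$ is translation-invariant, complete, and generates the usual Fréchet topology on $\Gamma(V_{s_0})$, so it satisfies the standing convention of the excerpt.

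The final and hardest step is to verify that for any two sections $s_0,s_1\in\Gamma(\pi)$ the transition map $\varphi_{s_1}^{-1}\circ \varphi_{s_0}$ is $MC^{\infty}$ in the sense recalled in the excerpt. Restricted to its domain it has the form $\sigma\mapsto \Phi\circ\sigma$ for a smooth fiberwise map $\Phi$ between open subsets of $V_{s_0}$ and $V_{s_1}$, obtained by composing the two exponential parametrizations. Bounded geometry translates, via bounds on the curvature of $E$ and all its covariant derivatives, into uniform $C^k$-bounds on $\Phi$ and all its fiberwise derivatives; these uniform bounds are precisely what is needed to show that the Gâteaux derivative of $\sigma\mapsto\Phi\circ\sigma$ exists, lies in $\mathcal{L}_{d,g}$, and depends continuously on the base point in the operator metric $D_{d,g}$. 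Iterating the recursive definition of $\mc$ recalled above then yields $MC^{\infty}$-smoothness.

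The principal obstacle is exactly this passage from Keller differentiability to \emph{bounded} differentiability: without the global curvature bounds supplied by bounded geometry, the superposition operators would have only continuous Gâteaux derivatives, not Lipschitz ones between the seminorms, and the $\mc$-structure would break down already at the first iteration. Once this step is settled, the atlas $\{(U_{s_0},\varphi_{s_0})\}_{s_0\in\Gamma(\pi)}$ is $MC^{\infty}$-compatible and the claimed bounded Fréchet manifold structure on $\Gamma(\pi)$ follows.
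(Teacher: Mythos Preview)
The paper does not prove this theorem at all: it is quoted verbatim from M\"uller \cite[Theorem~3.35]{muller} and stated without proof, serving only as input for the subsequent corollary on $\me$. There is therefore no ``paper's own proof'' to compare against.

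That said, your outline is broadly the standard one and is, in spirit, what M\"uller carries out: charts via the fiberwise exponential map, model spaces $\Gamma(s_0^{\ast}VE)$ with the $C^k$-seminorms, and the observation that bounded geometry supplies the uniform derivative bounds on the transition diffeomorphisms needed to upgrade Keller differentiability of the superposition operators to $MC^{\infty}$-smoothness. Two points you gloss over would need attention in a complete argument: first, the model spaces $\Gamma(V_{s_0})$ vary with $s_0$, so you must either fix a single model (via a bounded trivialization of $VE$) or explicitly verify that the transition maps identify them as $MC^{\infty}$-isomorphic Fr\'echet spaces; second, on a noncompact base the space of \emph{all} smooth sections need not have finite $C^k$-seminorms, so one must restrict to sections with bounded derivatives of all orders (on the closed manifold relevant to this paper the issue disappears). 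Your identification of the bounded-differentiability step as the crux is exactly right; that is where the bounded-geometry hypothesis does its work.
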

\begin{cor}
The space $\me$ has the structure of a bounded Fr\'{e}chet manifold.
Furthermore, $ \me $ 
is an $ MC^{\infty} $-Fr\'{e}chet manifold.
\end{cor}
\begin{proof}
Consider the bundle $\pi : T^*M \otimes  T^*M \rightarrow M$ (symmetric tensor product of the cotangent bundle $T^*M$).
We can endow this bundle with a Riemannian bundle metric $<.,.>$  of bounded geometry. The space
$\me$ is the space of all sections of the fiber bundle 
$\mathrm{Riem}(M)$ defined pointwise as the positive definite elements (pointwise an open convex cone). Now,
any section of $\pi$ with values in $\mathrm{Riem}(M)$ has an open neighborhood $\mathcal{U}$ contained in 
$\mathrm{Riem}(M)$, and as $\pi$ is a vector bundle with a metric of bounded geometry, we can conclude that $\mathrm{Riem}(M)$
is of bounded geometry. Therefore, Theorem~\ref{boun} follows that $\me$ the structure of a bounded Fr\'{e}chet manifold. Since transition mappings are globally Lipschitz, it follows by  \cite[Lemma B.1]{gl} that they are of class $ MC^{\infty} $ and therefore $ \me$ is an $ MC^{\infty} $-Fr\'{e}chet manifold. 
\end{proof}
A vector field on an infinite dimensional Fr\'{e}chet manifold may have no, one ore multiple  integral curves locally.
However, a vector field on a bounded Fr\'{e}chet manifold always has a unique integral curve. More precisely, 
\begin{theorem}\cite[Theorem 5.1]{ka}\label{ex}
Let $\xi :M \rightarrow TM$ be an $ MC^k $-vector field, $ k\geq1 $. Then  there exits an integral curve for $\xi$ at $p \in M$. Furthermore, any two such curves are equal on the intersection of their domains.
\end{theorem}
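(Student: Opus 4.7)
The plan is to work locally in a chart around $p$ and mimic the classical Picard--Lindel\"{o}f argument, exploiting the Lipschitz control that the $MC^1$ regularity automatically provides through the metric $D_{d,d}$ on $\mathcal{L}_{d,d}(E,E)$. First I would choose an $\mc$ chart $(U,\varphi)$ with $\varphi(p)=0$ in a Fr\'{e}chet space $(E,d)$, transport $\xi$ to an $\mc$ map $\tilde{\xi}:\varphi(U)\to E$, and reformulate the integral-curve problem as the fixed-point equation
\begin{equation*}
\gamma(t)=\int_{0}^{t}\tilde{\xi}(\gamma(s))\,ds.
\end{equation*}

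The next step is to extract quantitative estimates from the bounded differentiability. Since $\operatorname{D}\tilde{\xi}(x)\in\mathcal{L}_{d,d}(E,E)$ depends continuously on $x$ in the metric $D_{d,d}$, on a small closed ball $\overline{B}_r(0)$ one obtains a uniform Lipschitz constant $L$ for $\tilde{\xi}$ with respect to $d$ (via the mean-value estimate for Keller-differentiable maps valued in $\mathcal{L}_{d,d}$), together with a uniform bound $M=\sup_{\overline{B}_r(0)}d(\tilde{\xi}(x),0)$. Choosing $T>0$ with $MT\leq r$ and $LT<1$, I would then run Picard iteration $\gamma_0\equiv 0$, $\gamma_{n+1}(t)=\int_0^t \tilde{\xi}(\gamma_n(s))\,ds$; induction yields $\sup_{|t|\leq T}d(\gamma_{n+1}(t),\gamma_n(t))\leq M(LT)^n/n!$, so the sequence is Cauchy in the complete metric space $C([-T,T],\overline{B}_r(0))$ and its limit furnishes an integral curve of $\xi$ at $p$ via $\varphi^{-1}$.

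For uniqueness, two integral curves $\gamma_1,\gamma_2$ at $p$ give $d(\gamma_1(t),\gamma_2(t))\leq L\int_0^{|t|}d(\gamma_1(s),\gamma_2(s))\,ds$ on the intersection of their domains, so Gr\"{o}nwall's inequality (whose proof carries over verbatim to a translational-invariant metric) forces $\gamma_1=\gamma_2$ there. The main obstacle is that in a general Fr\'{e}chet space neither a norm-contraction argument nor an intrinsic Riemann integral is available; this is precisely what the $\mc$ framework is engineered to supply. Extracting a genuine Lipschitz estimate in $d$ from the bounded-differentiability data, and making sense of $\int_0^t \tilde{\xi}(\gamma(s))\,ds$ as an $E$-valued integral whose metric norm is dominated by the sup of the integrand, are the technical heart of the argument; once these are in place, the classical Picard scheme goes through essentially unchanged.
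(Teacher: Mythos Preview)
The paper does not prove this result; it simply quotes \cite[Theorem~5.1]{ka} and uses it as a black box, so there is no proof here to compare your attempt against. Your Picard--Lindel\"of outline---pass to a chart, extract a uniform bound on $\mathpzc{Lip}(\operatorname{D}\tilde\xi(x))_{d,d}$ from continuity of $x\mapsto\operatorname{D}\tilde\xi(x)$ in the metric $D_{d,d}$, convert this into a Lipschitz estimate for $\tilde\xi$ via a mean-value inequality, and run a fixed-point argument on $C([-T,T],\overline B_r(0))$---is precisely the strategy used in the cited references \cite{ka,muller}.

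One detail deserves a warning. The bound $d(\gamma_{n+1}(t),\gamma_n(t))\le M(LT)^n/n!$ you wrote is the classical normed-space estimate; it relies on the homogeneity $d(\lambda v,0)=|\lambda|\,d(v,0)$, which a translation-invariant metric on a Fr\'echet space need not satisfy. Absolute convexity of the $d$-balls gives only
\[
d\Bigl(\int_0^{t}h(s)\,ds,\,0\Bigr)\le\sup_{|s|\le|t|}d(h(s),0)\qquad(|t|\le 1),
\]
so a naive iteration yields contraction factor $L$ rather than $LT$, and $L<1$ cannot be forced by shrinking $T$. You have correctly flagged the integral estimate as ``the technical heart of the argument''; just be aware that the factorial-decay bound as stated does not go through verbatim. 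In the cited proofs this step is handled by an auxiliary rescaled (equivalent) metric, or an equivalent device, that restores enough smallness in $T$ for Banach's fixed-point theorem to apply. With that adjustment the remainder of your plan, including the Gr\"onwall argument for uniqueness, is correct and matches the published proof.
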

As an immediate corollary we get the following result. 
\begin{theorem}
 	Let $ M $ be a closed Einstein manifold. 
There exists a unique solution $g(t)$ to the Ricci flow equation for $t \in [0,T)$, where $T$ depends on the initial metric.
\end{theorem}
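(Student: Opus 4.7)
The plan is to reformulate the Ricci flow as an autonomous first-order ODE on the bounded Fr\'echet manifold $\me$ and then invoke Theorem~\ref{ex}. Concretely, I would define the map $\xi:\me\to T\me$ by $\xi(g)\coloneq -2\,\ri(g)$. Since $\me$ sits as an open (positive-definite) subset of the Fr\'echet space $\Gamma(T^*M\otimes T^*M)$ of smooth symmetric $(0,2)$-tensor fields, its tangent space at any $g$ is canonically identified with that same Fr\'echet space, and $\ri(g)$ is a section of this bundle. Hence $\xi$ is set-theoretically a vector field on $\me$.

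The key technical step is to show that $\xi$ is a bounded-differentiable ($MC^\infty$) vector field on $\me$, not merely a continuous one. In any chart modelled on $\Gamma(T^*M\otimes T^*M)$, the components of $\ri(g)$ are rational expressions in the coefficients of $g$, the coefficients of $g^{-1}$, and the first two partial derivatives of $g$; the only non-polynomial dependence is through $g^{-1}$, which is smooth on the open cone of positive-definite sections. Because $M$ is compact and the bundle $T^*M\otimes T^*M$ is of bounded geometry, the standard metrics generating the Fr\'echet topology are controlled by the $C^n$-seminorms, and I would check chart-by-chart that the directional derivatives of each order exist, are Keller-continuous, and lie in the appropriate space $\mathcal{L}_{d,g}$ with finite Lipschitz seminorm. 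This is the part I expect to be the main obstacle: namely, producing the uniform Lipschitz estimates $\mathpzc{Lip}(\operatorname{D}^k\xi)_{g,d}<\infty$ required by the $MC^k$-formalism, rather than just the pointwise smoothness that is evident from the formula.

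Granted that $\xi$ is a bounded-smooth vector field, Theorem~\ref{ex} applies verbatim to the initial point $g_0\in\me$: there exists an integral curve $g:[0,T)\to\me$ of $\xi$ with $g(0)=g_0$, and any two integral curves through $g_0$ coincide on the intersection of their domains. By the definition of an integral curve, $\dot g(t)=\xi(g(t))=-2\,\ri(g(t))$, which is exactly the Ricci flow equation, so existence follows immediately. The uniqueness clause of Theorem~\ref{ex} then transfers to uniqueness of the Ricci flow solution on any common interval $[0,T')\subseteq[0,T)$, finishing the argument.

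I would finally note that, unlike the DeTurck trick, this approach does not need to detour through a strictly parabolic auxiliary equation and then undo a diffeomorphism: the weak parabolicity of the Ricci equation is absorbed entirely into the stronger ODE theory available on bounded Fr\'echet manifolds, where Theorem~\ref{ex} plays the role of the Picard--Lindel\"of theorem. The bound $T$ on the maximal time of existence is then simply the lifetime of the integral curve of $\xi$ emanating from $g_0$.
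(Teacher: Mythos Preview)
Your proposal is correct and follows essentially the same route as the paper: define the vector field $\xi(g)=-2\,\ri(g)$ on $\me$, observe that it is $MC^\infty$, and invoke Theorem~\ref{ex} to obtain the local integral curve through $g_0$. The paper's proof is in fact terser than yours---it simply asserts that the vector field is $MC^\infty$ without the chart-by-chart discussion of Lipschitz bounds you outline---so the technical step you flag as the ``main obstacle'' is precisely the point the paper leaves unverified.
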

\begin{proof}
	The manifold $ \me $ of Riemannian metrics of $ M $ is an $ MC^{\infty} $-Fr\'{e}chet manifold by Corollary \ref{boun}.
Consider the vector field
\begin{equation}\label{eq:ricci}
g \mapsto -2\mathrm{Ric}(g)=-2kg,
\end{equation}
where $ g $ is a Riemannian metric and $ k $ is a constant.
Locally it is linear and globally Lipschitz. Therefore, by \cite[Lemma B.1]{gl}  is $ MC^{\infty} $.
Then the Ricci flow curve is the integral curve of this vector field. Hence  Theorem~\ref{ex} implies
its  local existence and uniqueness with any initial metric at $t=0$.
\end{proof}

\end{document}